\title{Relations amongst twists along Montesinos twins in the 4-sphere}
\author{David T Gay}
\author{Daniel Hartman}
\newtheorem{theorem}{Theorem}
\newtheorem{lemma}[theorem]{Lemma}
\newtheorem{proposition}[theorem]{Proposition}
\newtheorem{corollary}[theorem]{Corollary}
\theoremstyle{definition}
\newtheorem{definition}[theorem]{Definition}
\newtheorem{remark}[theorem]{Remark}
\def\Z{\mathbb Z}
\def\N{\mathbb N}
\newcommand{\into}{\ensuremath{\hookrightarrow}}
\newcommand{\id}{\mathop{\rm id}\nolimits}
\newcommand{\Diff}{\mathop{\rm Diff}\nolimits}
\begin{document}

\begin{abstract}    

Isotopy classes of diffeomorphisms of the $4$--sphere can be described either from a Cerf theoretic perspective in terms of loops of $5$--dimensional handle attaching data, starting and ending with handles in cancelling position, or via certain twists along submanifolds analogous to Dehn twists in dimension two. The subgroup of the smooth mapping class group of the $4$--sphere coming from loops of $5$--dimensional handles of index $1$ and $2$ coincides with the subgroup generated by twists along Montesinos twins (pairs of $2$--spheres intersecting transversely twice) in which one of the two $2$-spheres in the twin is unknotted. In this paper we show that this subgroup is in fact trivial or cyclic of order two.

%
\end{abstract}

\maketitle
%
%

\section{Introduction}

By the smooth mapping class group of a smooth manifold $X$ we mean $\pi_0(\mathop{\Diff^+}(X))$, where $\mathop{\Diff^+}(X)$ is the space of orientation preserving self diffeomorphisms of $X$. A good way to describe an element of $\pi_0(\mathop{\Diff^+(X)})$, where $X$ is $n$--dimensional, is to build an $(n+2)$--dimensional bundle $W$ over $S^1$ of $(n+1)$--dimensional cobordisms, such that each fiber $Z_t$ over $t \in S^1$ is a cobordism from $X$ to an $n$--manifold $X_t$ and such that $Z_0$ is canonically identified with the trivial cobordism $[0,1] \times X$. Then $W$ is a cobordism from $S^1 \times X$ to an $X$--bundle over $S^1$ with monodromy some interesting element of $\pi_0(\mathop{\Diff^+(X)})$. A good way to describe such a bundle $W$ is to give a $1$--parameter family of $(n+1)$--dimensional handle attaching data in $X$, starting and ending with data describing some number of pairs of handles in cancelling position (meaning that attaching spheres meet belt spheres transversely once).

In~\cite{GayS4Diffeos}, using the fact that every orientation preserving diffeomorphism of $S^4$ is pseudoisotopic to the identity, the first author showed that every element of $\pi_0(\mathop{\Diff^+}(S^4))$ can be given in this way by a $1$--parameter family of $2$--$3$--handle pairs, and that under favorable conditions (it is unclear whether these conditions might perhaps always be satisfied) such a family can be traded for a family involving a single $1$--$2$--handle pair. Here we study the subgroup of $\pi_0(\mathop{\Diff^+}(S^4))$ coming from families of $1$--$2$--handle pairs, the ``$1$--$2$--subgroup''. In~\cite{GayS4Diffeos} the first author gave a countable list of generators for the $1$--$2$--subgroup, and here we use the explicit descriptions of those generators to show that this subgroup is actually generated by a single element, and that the square of this element is trivial.

The generators mentioned above are in fact described as twists along certain submanifolds of $S^4$, so now we develop this perspective more carefully.
%
%

\begin{definition} \label{D:GeneralTwist}
 Given an embedding $f: S^1 \times \Sigma \into X$, for some closed, oriented surface $\Sigma$ and some smooth oriented $4$--manifold $X$, the {\em twist along $f$} is the isotopy class of diffeomorphisms $\tau_f$ obtained by choosing an orientation preserving embedding $[-1,1] \times S^1 \times \Sigma$ extending $f$ and performing a right-handed Dehn twist along $[-1,1] \times S^1$ and the identity along $\Sigma$. As an element of $\pi_0(\mathop{Diff^+}(X))$, $\tau_f$ is uniquely determined by the isotopy class of the embedding $f$. 
\end{definition}

%

\begin{definition} \label{D:MontesinosTwin}
 A {\em Montesinos twin} in a $4$--manifold $X$ is a pair $W=(R,S)$ of embeddings $R,S: S^2 \into X$, each with trivial normal bundle, which intersect transversely at two points. A {\em half-unknotted} Montesinos twin is one in which one of the two $2$--spheres is unknotted.
\end{definition}

As shown in~\cite{MontesinosI, MontesinosII} and discussed in~\cite{GayS4Diffeos}, the boundary $\partial \nu(W)$ of a neighborhood of a Montesinos twin $W$ in an oriented $4$--manifold $X$ is diffeomorphic to $T^3$, and if $X=S^4$ and we label the factors of $T^3$ as $S^1_l \times S^1_R \times S^1_S$, this parametrization of $\partial \nu(W)$ is canonically determined by the oriented isotopy class of $W$ up to independent orientation preserving reparametrizations of $S^1_l$, $S^1_R$ and $S^1_S$ and ambient isotopy in $X$. The $S^1_l$ factor is homologically trivial in $S^4$, while $S^1_R$ is a positive meridian for $R$ and $S^1_S$ is a positive meridian for $S$.

\begin{definition} \label{D:MontesinosTwist}
 Given a Montesinos twin $W$ in $S^4$, the {\em twist along $W$}, denoted $\tau_W$, is the twist along the embedding $S^1_l \times (S^1_R \times S^1_S) \into S^4$, as defined in Definition~\ref{D:GeneralTwist}. Let $\mathcal{M} \subset \mathcal{T}_1(S^4)$ be the subgroup of $\pi_0(\mathop{Diff^+}(S^4))$ generated by twists along Montesinos twins. Let $\mathcal{M}_0 \subset \mathcal{M}$ be the subgroup generated by twists along half-unknotted Montesinos twins $W=(R,S)$.
\end{definition}

As we will discuss below, $\mathcal{M}_0$ is precisely the $1$--$2$--subgroup of $\pi_0(\mathop{Diff^+}(S^4))$.

The following is our main result:

\begin{theorem} \label{T:M0order2}
The group $\mathcal{M}_0$ generated by twists along half-unknotted Montesinos twins is either trivial or cyclic of order two.
\end{theorem}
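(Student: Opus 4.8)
The plan is to show two things: first, that the group $\mathcal{M}_0$ is abelian and generated by a single element $\tau$; second, that $\tau^2 = 1$. The claim that $\mathcal{M}_0$ is cyclic is the statement that any two generators $\tau_W$ and $\tau_{W'}$ coming from half-unknotted Montesinos twins are equal up to sign in the group. Since the countable list of generators from~\cite{GayS4Diffeos} is indexed by discrete isotopy data, the idea is to produce, for each generator, a \emph{smooth path} through the relevant handle-attaching data that connects its defining twin to a single fixed model twin $W_0$, so that all the corresponding twists are identified. The \textbf{key observation} driving this should be that the half-unknottedness hypothesis gives a lot of freedom: because one $2$--sphere $R$ in $W=(R,S)$ is unknotted, it bounds a standard $3$--ball, and the twin is determined up to isotopy by much less data than in the general case. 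I would translate each generator into its Cerf-theoretic form as a loop of $1$--$2$--handle data and then use elementary handle moves (handle slides, births and deaths of cancelling pairs, and isotopies of attaching maps) to reduce the combinatorial data to a normal form.

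\emph{First} I would recall the explicit description of the generators of $\mathcal{M}_0$ as given in~\cite{GayS4Diffeos}, identifying each with a loop of $1$--$2$--handle attaching data, and set up the correspondence between such loops and half-unknotted twins $W=(R,S)$. \emph{Next} I would isolate the finite amount of isotopy data that actually distinguishes one generator from another; with $R$ unknotted this should come down to the framing/winding data of the attaching circle of the $2$--handle relative to the $1$--handle, i.e.\ an integer or a residue. \emph{Then} I would show that modifying this integer by one corresponds to an allowable handle slide or finger-move that does not change the isotopy class of the resulting diffeomorphism, yielding relations $\tau_{W_n} = \tau_{W_{n+1}}$ and collapsing the countable generating set to a single $\tau$. \emph{Finally}, to get $\tau^2 = 1$, I would exhibit a symmetry --- most naturally an orientation-reversing or reflection symmetry of the model twin $W_0$, or a loop of handle data realizing $\tau_{W_0}$ as its own inverse --- forcing $\tau = \tau^{-1}$, hence $\tau^2 = 1$. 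The conclusion ``trivial or cyclic of order two'' reflects that we establish $\tau^2=1$ but do not determine whether $\tau$ itself is nontrivial.

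\textbf{The main obstacle} I expect is the middle step: proving that the integer-indexed family of generators all coincide. The subtlety is that handle slides and finger moves must be carried out \emph{within a one-parameter family} that begins and ends in cancelling position, so that the manipulation corresponds to an honest isotopy of the loop of handle data rather than merely an abstract handle-move equivalence of the resulting $4$--manifolds. Controlling the topology of the relevant $2$--spheres and meridians (in particular keeping track of the parametrization $S^1_l \times S^1_R \times S^1_S$ of $\partial\nu(W)$ through the modification, and ensuring the half-unknottedness is preserved) is where the real geometric work lies. A secondary difficulty is locating the right symmetry for the order-two relation: one must verify that the reflection used to prove $\tau = \tau^{-1}$ genuinely fixes the isotopy class of the embedding $f$ defining the twist, as opposed to sending it to a different twin whose twist only happens to be conjugate.
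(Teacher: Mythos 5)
Your overall architecture --- collapse the countable generating set to a single generator, then kill its square via a symmetry of a model twin --- matches the paper, and your final step (exhibiting the model twin as isotopic to itself with the two spheres swapped, so that $\tau_{W(1)}=\tau_{W(1)}^{-1}$ via the relation $\tau_{(S,R)}=\tau_{(R,S)}^{-1}$) is exactly Lemma~\ref{L:W1Flip} combined with Lemma~\ref{L:Inverse}. But the middle step, which you yourself flag as the main obstacle, contains a genuine gap, and not merely one of missing details. You propose to prove $\tau_{W_n}=\tau_{W_{n+1}}$ by handle slides and finger moves carried out within the one-parameter family. That target relation is almost certainly false: the relation the paper actually establishes is $\tau_{W(i)}^{-1}=n_i\,\tau_{W(1)}$ with (as the paper notes) $n_i=\pm i$, so that once $2\tau_{W(1)}=0$ is known one gets $\tau_{W(i)}=i\,\tau_{W(1)}$ modulo $2$; if $\tau_{W(1)}$ is nontrivial then $\tau_{W(2)}$ is trivial while $\tau_{W(1)}$ and $\tau_{W(3)}$ are not, so consecutive generators need not agree even up to sign. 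An argument aimed at equating consecutive generators is therefore aimed at the wrong statement, and no amount of care with the parametrization $S^1_l\times S^1_R\times S^1_S$ will rescue it.

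The ingredient your plan is missing is the Budney--Gabai computation of $\pi_1(\mathop{Emb}(S^1,S^1\times S^3))$: their invariant $W_1\times W_2$ is an \emph{isomorphism} onto $\Z\times\Lambda^0$, so to relate generators it suffices to compute $W_2$ on the loops of circles obtained by surgering the twins along an unknotted sphere. The paper exploits the fact that \emph{both} spheres of $W(i)$ are unknotted (a feature your proposal does not use): it surgers along $R(i)$ to get the loops $\overline{\alpha(i)_t}$ associated to $\overline{W(i)}=(S,R(i))$, computes $W_2([\overline{\alpha(i)_t}])=n\,x^2$, compares with $W_2([\alpha(1)_t])=\pm x^2$, and concludes $[\overline{\alpha(i)_t}]=n_i[\alpha(1)_t]$ in $\pi_1$; applying $\mathcal{H}$ and Lemma~\ref{L:Inverse} then gives $-\tau_{W(i)}=\pm n_i\,\tau_{W(1)}$, which is what collapses the generating set. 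Without the completeness of $W_1\times W_2$ (or some substitute invariant), your proposal has no mechanism for certifying that two loops of handle data yield the same mapping class, so the reduction to a single generator remains unproved.
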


%

Besides the Cerf theoretic perspective identifying $\mathcal{M}_0$ as the $1$--$2$--subgroup, one can also think of $\mathcal{M}_0$ as the simplest class of ``twist subgroups'' of $\pi_0(\mathop{Diff^+}(S^4))$, with $\mathcal{M}$, the subgroup generated by twists along arbitrary Montesinos twins, being the next interesting case. Continuing from there one can consider subgroups generated by twists along general embeddings of $S^1 \times \Sigma_g$, for surfaces of various genus $g$. It would be interesting to know if Theorem~\ref{T:M0order2} can be generalized to say something about these potentially more complicated subgroups.

Both authors of this paper were supported in their work on this project by National Science Foundation grant DMS-2005554 ``Smooth $4$--Manifolds: $2$--, $3$--, $5$-- and $6$--Dimensional Perspectives''.

\section{The proof modulo one main calculation}
There are two main ingredients in the proof of Theorem~\ref{T:M0order2}.  The first is part of Lemma~3 in~\cite{GayS4Diffeos}.
\begin{lemma} \label{L:Inverse}
 $\tau_{(S,R)} = \tau_{(R,S)}^{-1}$
\end{lemma}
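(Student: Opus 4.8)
The statement to prove is Lemma~\ref{L:Inverse}: that $\tau_{(S,R)} = \tau_{(R,S)}^{-1}$, i.e.\ swapping the roles of the two $2$--spheres in a Montesinos twin inverts the associated twist. Here is how I would approach it.

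The plan is to reduce the identity to a statement about framings of the torus $\partial\nu(W) \cong T^3$ and the orientation conventions that enter Definition~\ref{D:MontesinosTwist}. The twist $\tau_{(R,S)}$ is the twist along the embedding $S^1_l \times (S^1_R \times S^1_S) \into S^4$, and $\tau_{(S,R)}$ is the twist along $S^1_l \times (S^1_S \times S^1_R)$; the only change is that the two meridian factors of the surface $\Sigma = T^2$ are interchanged while the ``collar'' circle $S^1_l$ is kept fixed. First I would write down carefully what data a twist along $f\colon S^1 \times \Sigma \into S^4$ depends on: by the last sentence of Definition~\ref{D:GeneralTwist}, $\tau_f$ depends only on the oriented isotopy class of $f$, and a right-handed Dehn twist is performed in the $[-1,1] \times S^1_l$ direction while the identity is used on the $\Sigma$ factor. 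So the relevant question is how the orientations of $S^1_l$ and of the surface factor $\Sigma$ change when we pass from $(R,S)$ to $(S,R)$.

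The key geometric input is the description of the canonical parametrization of $\partial\nu(W)$ recalled just before Definition~\ref{D:MontesinosTwist}: the factor $S^1_l$ is determined (up to orientation-preserving reparametrization) as the homologically trivial circle, while $S^1_R$ is a positive meridian of $R$ and $S^1_S$ a positive meridian of $S$. Swapping $R \leftrightarrow S$ therefore fixes the $S^1_l$ factor but exchanges the two meridian circles, and I would check that this exchange of the ordered pair of oriented meridians reverses the induced orientation on the surface $\Sigma = S^1_R \times S^1_S$ (the product orientation of an ordered pair is odd under transposition). I would then argue that reversing the orientation of the surface factor, while leaving the $S^1_l$ factor and its orientation unchanged, converts a right-handed Dehn twist in the $[-1,1]\times S^1_l$ plane into a twist whose isotopy class is the inverse: concretely, the mapping-class-level effect of the twist is governed by the homology class of the torus $\Sigma$ on which we drag the collar circle around, and negating $[\Sigma]$ negates the twisting and hence inverts $\tau_f$.

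The main obstacle I expect is keeping the orientation bookkeeping honest, since there are three circles in play and one must verify that it really is the surface orientation (and not the $S^1_l$ orientation) that flips, and that the ambiguities built into the canonical parametrization (the independent orientation-preserving reparametrizations of each $S^1$ factor) do not introduce an additional sign that spoils the computation. To control this I would set up a small explicit local model: identify a neighborhood of the torus with $[-1,1] \times S^1_l \times (S^1_R \times S^1_S)$, write the right-handed Dehn twist as an explicit map supported in the $[-1,1]\times S^1_l$ factor and parametrized by a loop in $S^1_R \times S^1_S$, and compute the effect of the involution $R \leftrightarrow S$ on this model directly. Since the statement is quoted from Lemma~3 of~\cite{GayS4Diffeos}, I would also cross-check the sign against that reference to confirm the right-handedness convention is the one yielding $\tau_{(S,R)} = \tau_{(R,S)}^{-1}$ rather than the equality $\tau_{(S,R)} = \tau_{(R,S)}$, which is exactly the sign that the orientation-reversal of $\Sigma$ is designed to produce.
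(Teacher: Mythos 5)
Your proposal is correct and follows essentially the same orientation-bookkeeping argument as the paper's proof: transposing the two positively oriented meridian factors $S^1_R$ and $S^1_S$ reverses one orientation, and since the total orientation of the collar $[-1,1] \times S^1_l \times T^2$ is pinned down by the orientation-preserving embedding into $S^4$, that single sign necessarily lands on the Dehn-twist annulus $[-1,1] \times S^1_l$, converting the right-handed twist into a left-handed one. The only difference is where you park the sign: the paper keeps the outward-normal collar and a correctly oriented parametrization of $\partial\nu(W) \cong T^3$, so it is the orientation of $S^1_l$ itself that reverses (your claim that $S^1_l$ is fixed, with the surface orientation flipping and hence the collar direction flipping, is an equally valid bookkeeping choice --- note that ``homologically trivial'' alone does not orient $S^1_l$), and in either accounting the annulus orientation, which is the only datum the handedness of the twist depends on, flips exactly once, giving $\tau_{(S,R)} = \tau_{(R,S)}^{-1}$.
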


\begin{proof}
 Switching $R$ and $S$ changes the parametrization of the boundary of a tubular neighborhood of $R \cup S$ from $S^1_l \times S^1_R \times S^1_S$ to $S^1_l \times S^1_S \times S^1_R$. However, we are not changing the orientations of $R$ or $S$, and thus the orientations of the meridians $S^1_R$ and $S^1_S$ do not change. Therefore, to keep our parametrization of the boundary of $R \cup S$ correctly oriented we need to switch the orientation of the longitudinal $S^1_l$ factor. Then, when we turn this into a parametrization of a neighborhood of this $3$--torus as $[-1,1] \times S^1_l \times T^2$, the $[-1,1]$ direction does not change orientation (being oriented by the outward normal convention), and thus the annulus $[-1,1] \times S^1_l$ in fact {\em does} change orientation. Therefore the Dehn twist along this annulus switches from a positive Dehn twist to a negative Dehn twist, and therefore $\tau_{(S,R)} = \tau_{(R,S)}^{-1}$.
\end{proof}

The second ingredient expands on the connection between twists along Montesinos twins and Cerf theoretically described diffeomorphisms of $S^4$ developed in~\cite{GayS4Diffeos}, so as to get a full set of generators and some relations for $\mathcal{M}_0$. We now briefly set up the Cerf theoretic picture in a little more detail. 

Let $\mathop{Emb}(S^1,S^1\times S^3)$ be the space of embeddings of $S^1$ into $S^1 \times S^3$, with basepoint taken to be the embedding $S^1 \times \{p\}$ for some $p \in S^3$. (In other words, without further comment, we will only be working in the component of $\mathop{Emb}(S^1,S^1\times S^3)$ containing $S^1 \times \{p\}$ and we will take that to be the basepoint for $\pi_1(\mathop{Emb}(S^1,S^1\times S^3))$.) There is a homomorphism $\mathcal{H}:\pi_1(\mathop{Emb}(S^1,S^1\times S^3)) \to \pi_0(\mathop{Diff^+}(S^4)$ discussed in~\cite{GayS4Diffeos} (where it is called $\mathcal{FH}_1$) which we describe briefly as follows: Given $a \in \pi_1(\mathop{Emb}(S^1,S^1\times S^3))$, let $\alpha_t :S^1 \into S^1 \times S^3$, with $t \in [0,1]$, be a loop of embeddings representing $a$, with $\alpha_0 = \alpha_1 = S^1 \times \{p\}$. Extend this to a loop of {\em framed} embeddings (the fact that we can do this is also explained in~\cite{GayS4Diffeos}). For each $t$, let $Z_t$ be the $5$--dimensional cobordism built by starting with $[0,1] \times S^4$, attaching a $5$--dimensional $1$--handle along a fixed standard attaching map into $\{1\} \times S^4$, to give an upper boundary canonically identified with $S^1 \times S^3$, and then attaching a $2$--handle along the framed circle $\alpha_t$. Note that $Z_0$ and $Z_1$ are the same $5$--manifold (i.e. built exactly the same way), so we can put these cobordisms together to build a $6$--manifold $W$ fibering over $S^1 = [0,1]/1\sim 0$, with fiber over $t \in S^1$ equal to $Z_t$, so that $W$ itself is a cobordism from $S^1 \times S^4$ to some $4$--manifold bundle over $S^1$. In other words, each $Z_t$ is a cobordism from $S^4$ to some $4$--manifold $X_t$, and the top boundary of $W$ is a $5$--manifold fibering over $S^1$ with fiber over $t \in S^1$ equal to $X_t$. Furthermore, since our basepoint is $S^1 \times \{p\}$, the $2$--handle at $t=0=1$ cancels the $1$--handle, and thus $X_0$ can be canonically identified with $S^4$. Thus the top of the cobordism $W$ is in fact an $S^4$--bundle over $S^1$ with some potentially interesting monodromy which is determined by the (homotopy class of the) loop of attaching maps $\alpha_t$. This monodromy, as an element of $\pi_0(\mathop{Diff^+}(S^4))$, is, by definition, $\mathcal{H}(a)$.

There is an obvious subgroup of $\pi_1(\mathop{Emb}(S^1,S^1\times S^3))$ in the kernel of $\mathcal{H}$, namely the subgroup of homotopy classes represented by embeddings with image equal to $S^1 \times \{p\}$, i.e. the subgroup corresponding to reparametrizations of the domain $S^1$ (or ``spinning the circle in place''). By multiplying by elements of this subgroup, we can thus always assume that our loops of embeddings $\alpha_t: S^1 \into S^1 \times S^3$ have the property that the circle $\{\alpha_t(z) \mid t \in [0,1]\}$, for a fixed $z \in S^1$, is homotopically trivial in $\pi_1(S^1 \times S^3) = \Z$.

The connection between Montesinos twins and loops of circles in $S^1 \times S^3$ is seen as follows: Given a loop $\alpha_t: S^1 \into S^1 \times S^3$ as in the preceding paragraph, suppose that the mapped in torus $T: S^1 \times S^1 \to S^1 \times S^3$ defined by $T(t,z) = \alpha_t(z)$ is actually an embedding. (Budney and Gabai~\cite{BudneyGabai} in fact show that every element of $\pi_1(\mathop{Emb}(S^1,S^1 \times S^3))$ can be represented by such a loop $\alpha_t$.) Let $C$ be the basepoint circle $S^1 \times \{p\}$. Note that $C$ lies on $T$. Surgery along $C$ turns the triple $(S^1 \times S^3,T,C)$ into a triple $(S^4,R,S)$ where $R$ is an embedded $S^2$ in $S^4$ obtained by surgering the torus $T$ along the essential simple closed curve $C$ and $S$ is the embedded $S^2$ which is the core of the surgery (i.e. the surgery replaces $S^1 \times B^3$ with $B^2 \times S^2$, and $S$ is $\{0\} \times S^2 \subset B^2 \times S^2$). Furthermore, $R$ and $S$ intersect transversely at two points, namely the two ``scars'' on $R$ resulting from surgering the torus down to a sphere along $C$. In other words, $W=(R,S)$ is a Montesinos twin in $S^4$, which we will call the twin associated to the loop $\alpha_t$. Note that $S$ is unknotted, since it results from surgery along $S^1 \times \{p\} \subset S^1 \times S^3$, so that $W$ is a half-unknotted Montestinos twin. Conversely, given a Montesinos twin $W=(R,S)$ in $S^4$, {\em if we assume that $S$ is unknotted}, then surgering $(S^4,R,S)$ along $S$ yields $(S^1 \times S^3,T,C)$ where $C = S^1 \times \{p\}$ and $T$ is an embedding of $S^1 \times S^1$ which is the trace of a loop of embeddings $\alpha_t: S^1 \into S^1 \times S^3$, and we will call this the loop of circles associated to the twin $W$.

The following lemma is implicit in the proof of Theorem~4 in~\cite{GayS4Diffeos}.
\begin{lemma} \label{L:TwinsAndTori}
 Let $W=(R,S)$ be a half-unknotted Montesinos twin in $S^4$; for the moment assume that $S$ is unknotted. Let $\alpha_t$ be the loop of circles in $S^1 \times S^3$ associated to $W$. Then $\tau_W = \mathcal{H}([\alpha_t])$. If, on the other hand, $R$ is unknotted, then let $\overline{\alpha_t}$ be the loop of circles associated to $\overline{W} = (S,R)$. In this case, $\tau_W = \tau_{\overline{W}}^{-1} = \mathcal{H}([\overline{\alpha_t}])^{-1} = \mathcal{H}([\overline{\alpha_{1-t}}])$.
\end{lemma}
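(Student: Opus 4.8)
The plan is to put essentially all of the geometric content into the first assertion (the case where $S$ is unknotted), and to deduce the second assertion formally from Lemma~\ref{L:Inverse} together with the fact that $\mathcal{H}$ is a homomorphism. For the first assertion I would reconstruct the diffeomorphism $\mathcal{H}([\alpha_t])$ directly from the construction recalled above and exhibit it as the Dehn twist defining $\tau_W$. In building the bundle $W \to S^1$ with fibers $Z_t$, everything below the $2$--handle --- namely $[0,1]\times S^4$, the fixed $1$--handle, and the resulting identification of the top of the $1$--handle with $S^1\times S^3$ --- is independent of $t$ and can be trivialized once and for all. Thus the monodromy of the top $S^4$--bundle is read off entirely from the family of framed $2$--handle attachments, i.e.\ from the loop $\alpha_t$ and its framing.

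I would then compute that monodromy as follows. Each top $X_t$ is $S^1\times S^3$ surgered along $\alpha_t$, so $X_t = \big((S^1\times S^3)\setminus \nu(\alpha_t)\big)\cup (B^2\times S^2)$, with the glued--in core $\{0\}\times S^2$ becoming $S$ at $t=0$. Applying isotopy extension to the loop $\alpha_t$ produces an ambient isotopy $\Phi_t$ of $S^1\times S^3$ with $\Phi_0 = \id$ and $\Phi_t(\alpha_0) = \alpha_t$; since $\alpha_1 = \alpha_0 = C$, the time--one map $\Phi_1$ fixes $C$ and, after descending through the surgery, is the monodromy $\mathcal{H}([\alpha_t])$. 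Because $\alpha_t$ sweeps out the embedded torus $T$ and the isotopy is supported near $T$, the map $\Phi_1$ is supported near $T$, and surgering $C$ carries this support to a neighborhood of the twin $W=(R,S)$ obtained by compressing $T$ along $C$. I would then identify $\Phi_1$ on $\partial\nu(W)=S^1_l\times S^1_R\times S^1_S$ with the Dehn twist along the annulus $[-1,1]\times S^1_l$ of Definition~\ref{D:MontesinosTwist}. Here the homologically trivial longitude $S^1_l$ is precisely the loop--parameter circle $\{\alpha_t(z_0)\mid t\in[0,1]\}$, which by the normalization discussed before the lemma is homotopically trivial in $S^1\times S^3$; the meridian $S^1_S$ is the meridian of $C$ in $S^1\times S^3$, and $S^1_R$ the meridian of $R$.

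The main obstacle is this last identification, and in particular pinning down the \emph{handedness} of the resulting twist: one must check that the monodromy of the loop $\alpha_t$ produces a \emph{right}--handed Dehn twist along $[-1,1]\times S^1_l$, so that it equals $\tau_W$ rather than $\tau_W^{-1}$. This is the orientation bookkeeping carried out in the proof of Theorem~4 of~\cite{GayS4Diffeos}, tracked with the same conventions as in Lemma~\ref{L:Inverse}: the outward normal orients the $[-1,1]$ factor, the meridians $S^1_R$ and $S^1_S$ are oriented positively, and these fix the orientation of $S^1_l$ and hence the sign of the twist. I expect the cleanest route is to verify the identity in the localized model near $T$ and then invoke the fact that $\tau_W$ depends only on the oriented isotopy class of the embedding (Definition~\ref{D:GeneralTwist}).

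With the first assertion in hand, the second is formal. If instead $R$ is unknotted, then $\overline{W}=(S,R)$ is a half-unknotted twin whose second sphere $R$ is unknotted, so the first assertion applies to give $\tau_{\overline{W}} = \mathcal{H}([\overline{\alpha_t}])$. Lemma~\ref{L:Inverse} gives $\tau_W = \tau_{(R,S)} = \tau_{(S,R)}^{-1} = \tau_{\overline{W}}^{-1}$, whence $\tau_W = \mathcal{H}([\overline{\alpha_t}])^{-1}$. Finally, since $\mathcal{H}$ is a homomorphism, $\mathcal{H}([\overline{\alpha_t}])^{-1} = \mathcal{H}([\overline{\alpha_t}]^{-1})$, and the inverse class in $\pi_1(\mathop{Emb}(S^1,S^1\times S^3))$ is represented by reversing the loop parameter, $[\overline{\alpha_t}]^{-1} = [\overline{\alpha_{1-t}}]$; this yields $\tau_W = \mathcal{H}([\overline{\alpha_{1-t}}])$ and completes the chain of equalities.
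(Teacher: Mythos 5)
Your proposal is correct and follows essentially the same route as the paper's proof: isotopy extension applied to the loop $\alpha_t$, an explicit twist model supported in a tubular neighborhood $D^2\times S^1\times S^1$ of the embedded torus $T$, identification of the three circle factors with $S^1_l$, $S^1_R$, $S^1_S$ after surgery along $C$ (using the normalization that the loop-parameter circle is homotopically trivial to pin down $S^1_l$), with the second assertion deduced formally from Lemma~\ref{L:Inverse}. The only difference is that where you defer the handedness check to the cited reference, the paper writes out the local model $\psi_t(r,\theta,a,b)=(r,\theta,a+tf(r),b)$ explicitly and verifies the twist is positive---while also noting that the precise convention is ultimately immaterial since the group in question turns out to be trivial or of order two.
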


Note that there are orientation conventions hidden in the above statement; in particular, one needs to understand how the orientations of $R$ and $S$ determine the orientations both of each circle $\alpha_t$, for each $t$, and of the $t$ direction in the loop of circles; equivalently, how the orientations of $R$ and $S$ correspond to the orientations of meridian and longitude for the torus $T: S^1 \times S^1 \into S^1 \times S^3$. The truth is that it suffices to know that there exists some orientation convention that makes it correct, but in the end we will not need to nail down that convention to get our proofs correct because we show that the whole group involved is trivial or cyclic of order two.

\begin{proof}[Proof of Lemma~\ref{L:TwinsAndTori}]
 We have to show that, with appropriate orientation conventions, $\tau_W = \mathcal{H}([\alpha_t])$  when $W=(R,S)$ and $S$ is unknotted. The second half of the statement of Lemma~\ref{L:TwinsAndTori}, for $\overline{W}$, just follows directly from Lemma~\ref{L:Inverse}.
 
 Given a loop of embeddings $\alpha_t: S^1 \into S^1 \times S^3$, a diffeomorphism representing $\mathcal{H}([\alpha_t])$ can be defined as follows. Using the isotopy extension theorem let $\psi_t: S^1 \times S^3 \to S^1 \times S^3$ be an ambient isotopy starting from $\psi_0 = \id$, for $t \in [0,1]$, such that $\alpha_t = \psi_t \circ \alpha_0$. Since $\alpha_1 = \alpha_0 = S^1 \times \{p\} \subset S^1 \times S^3$ we can assume that $\psi_1$ is the identity on $S^1 \times U$ for $U$ a $3$--ball neighborhood of $p$. Then surgery along $\alpha_0 = S^1 \times \{p\}$ turns $S^1 \times S^3$ into $S^4$ in such a way that $\psi_1$ extends as the identity across the surgered region, and thus $\psi_1$ can be seen as a self diffeomorphism of $S^4$, and the isotopy class of $\psi_1$ on $S^4$ is $\mathcal{H}([\alpha_t])$.
 
 When the associated torus $T: S^1 \times S^1 \to S^1 \times S^3$ given by $T(t,b) = \alpha_t(b)$ is an embedding, then there is a standard construction of an explict ambient isotopy $\psi_t$ supported in a neighborhood $D^2 \times S^1 \times S^1$ of $T$ as follows: Let $(r,\theta) \in [0,1] \times [0,2\pi]$ be polar coordinates on $D^2$, with coordinates $(a,b)$ on $S^1 \times S^1$. (We have replaced the original $t$ variable by $a$ because now it represents a spatial coordinate, and $t$ will be used for the time parameter in the isotopy.) Let $f: [0,1] \to [0,1]$ be a smooth nonincreasing function which is $1$ on $[0,\epsilon]$ and $0$ on $[1-\epsilon,1]$ for some suitably small positive $\epsilon$. Then
 \[ \psi_t(r,\theta,a,b) =  (r,\theta,a+tf(r),b)\]
 is the desired isotopy. The complement of $\{r < \epsilon/2\}$ in our neighborhood $D^2 \times S^1 \times S^1$ can now be parameterized (and oriented) as $[\epsilon/2,1] \times S^1_\theta \times S^1_a \times S^1_b$. This orientation agrees with the orientation as $[\epsilon/2,1] \times S^1_a \times S^1_b \times S^1_\theta$ and, with respect to this orientation, we see that $\psi_1$ is a {\em positive} Dehn twist on $[\epsilon/2,1] \times S^1_a$ crossed with the identity on $S^1_b \times S^1_\theta$.
 
 We now need to see that the three circle parameters $S^1_a$, $S^1_b$ and $S^1_\theta$ translate into $S^1_l$, $S^1_R$ and $S^1_S$ when we surger along $C$ to turn $T$ into a Montesinos twin $(R,S)$. In particular we need to make sure that $S^1_a$ becomes $S^1_l$ so that the Dehn twist on $[\epsilon/2,1] \times S^1_a$ becomes the Dehn twist on $[-1,1] \times S^1_l$ in our original parametrization of the neighborhood of a Montesinos twin. First of all, $C$ is the circle $\{r=0,a=0,b \in S^1_b\}$ in $T= \{r=0, a \in S^1_a, b \in S^1_b\}$. The circle $S^1_\theta$ links $T$ and thus, when we surger $T$ to become the $2$--sphere $R$, $S^1_\theta$ becomes the meridian $S^1_R$. The circle $S^1_b$ essentially {\em is} the circle $C$, and thus after surgery becomes the meridian $S^1_S$ to the new $2$--sphere $S$. Finally in order to see that $S^1_a$ becomes the longitudinal circle $S^1_l$, we just need to see that $S^1_a$ is homologically trivial in the complement of $T$. This follows from the fact that $S^1_a$ is homotopically trivial in $S^1 \times S^3$, which we arranged earlier by multiplying by an appropriate element of the domain reparametrization subgroup of $\pi_1(\mathop{Emb}(S^1,S^1\times S^3))$.
\end{proof}

These facts above, together with the fact that $\pi_1(\mathop{Emb}(S^1, S^1 \times S^3))$ is generated by loops which come from embedded tori, immediately gives us the fact that the group of isotopy classes of diffeomorphisms of $S^4$ coming from loops of circles in $S^1 \times S^3$ agrees with the group generated by twists along half-unknotted Montesinos twins:
\begin{corollary} \label{C:M0andHofPi1}
 $\mathcal{H}(\pi_1(\mathop{Emb}(S^1, S^1 \times S^3))) = \mathcal{M}_0$
\end{corollary}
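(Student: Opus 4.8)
The plan is to prove the two inclusions separately; both directions fall out almost immediately from Lemma~\ref{L:TwinsAndTori} together with the Budney--Gabai generation result quoted above.

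First I would establish $\mathcal{M}_0 \subseteq \mathcal{H}(\pi_1(\mathop{Emb}(S^1, S^1 \times S^3)))$. By definition $\mathcal{M}_0$ is generated by the twists $\tau_W$ along half-unknotted Montesinos twins $W=(R,S)$, and since the image of $\mathcal{H}$ is a subgroup it suffices to show each such generator $\tau_W$ lies in that image. If $S$ is unknotted, then Lemma~\ref{L:TwinsAndTori} gives $\tau_W = \mathcal{H}([\alpha_t])$, where $\alpha_t$ is the associated loop of circles, so $\tau_W \in \mathrm{im}(\mathcal{H})$. If instead $R$ is unknotted, the same lemma gives $\tau_W = \mathcal{H}([\overline{\alpha_{1-t}}])$, again in the image. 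Either way $\tau_W \in \mathrm{im}(\mathcal{H})$, which yields the first inclusion.

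For the reverse inclusion $\mathcal{H}(\pi_1(\mathop{Emb}(S^1, S^1 \times S^3))) \subseteq \mathcal{M}_0$, I would use the fact that $\pi_1(\mathop{Emb}(S^1, S^1 \times S^3))$ is generated by loops $\alpha_t$ whose trace $T(t,z) = \alpha_t(z)$ is an embedded torus. As the image of $\mathcal{H}$ is generated by the images of these generators, it is enough to show $\mathcal{H}([\alpha_t]) \in \mathcal{M}_0$ for each such loop. Before surgering I would first multiply $\alpha_t$ by an element of the domain-reparametrization subgroup; this subgroup lies in $\ker \mathcal{H}$, so it does not change $\mathcal{H}([\alpha_t])$, while it lets me arrange that the circle $\{\alpha_t(z) \mid t \in [0,1]\}$ is null-homotopic in $S^1 \times S^3$, exactly the condition invoked in the proof of Lemma~\ref{L:TwinsAndTori}. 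Surgery on $(S^1 \times S^3, T, C)$ along $C = S^1 \times \{p\}$ then produces a half-unknotted Montesinos twin $W=(R,S)$ with $S$ unknotted and associated loop of circles $\alpha_t$, so Lemma~\ref{L:TwinsAndTori} gives $\mathcal{H}([\alpha_t]) = \tau_W \in \mathcal{M}_0$.

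The only genuinely nontrivial input is the external fact that $\pi_1(\mathop{Emb}(S^1, S^1 \times S^3))$ is generated by embedded-torus loops, and the step I expect to require the most care is the bookkeeping in the second inclusion: one must check that multiplying by a domain reparametrization simultaneously preserves $\mathcal{H}([\alpha_t])$ and achieves the null-homotopy condition, so that after surgery the factor $S^1_a$ really becomes the longitudinal $S^1_l$ and Lemma~\ref{L:TwinsAndTori} applies verbatim. Once those points are in place, the equality of subgroups follows at once.
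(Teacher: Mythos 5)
Your proposal is correct and follows exactly the route the paper intends: the paper states this corollary as an immediate consequence of Lemma~\ref{L:TwinsAndTori} (both halves, covering the cases where $S$ or $R$ is unknotted) together with the Budney--Gabai fact that $\pi_1(\mathop{Emb}(S^1, S^1 \times S^3))$ is generated by embedded-torus loops, giving no further proof. Your two-inclusion write-up, including the observation that domain reparametrizations lie in $\ker\mathcal{H}$ and can be used to arrange the null-homotopy condition before surgering, is just a careful expansion of that same argument.
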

One of the main results of~\cite{GayS4Diffeos} can be restated (combining Corollary~14 and Theorem~4 of~\cite{GayS4Diffeos} with Corollary~\ref{C:M0andHofPi1} above) as:
\begin{theorem}
 $\mathcal{M}_0$ is generated by twists $\tau_{W(i)}$, $i \in \N$, for the Montesinos twins $W(i)=(R(i),S)$ illustrated in Figures~1,~2 and~3 of~\cite{GayS4Diffeos}. The loops of circles $\alpha(i)_t: S^1 \into S^1 \times S^3$ associated to these twins are described by the embedded tori $T(i)$ illustrated in Figure~8 in~\cite{GayS4Diffeos}
\end{theorem}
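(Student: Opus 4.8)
The plan is to reduce the statement to a bookkeeping exercise across the homomorphism $\mathcal{H}$: identify $\mathcal{M}_0$ with the image of $\mathcal{H}$, invoke a known generating set on the domain side, and then carry that generating set forward using the torus-to-twin translation already established. Since $\mathcal{H}$ is a homomorphism, the image under $\mathcal{H}$ of any generating set of $\pi_1(\mathop{Emb}(S^1, S^1\times S^3))$ generates $\mathcal{H}(\pi_1(\mathop{Emb}(S^1, S^1\times S^3)))$, and by Corollary~\ref{C:M0andHofPi1} this image is precisely $\mathcal{M}_0$. So the entire content is to exhibit a good generating set for the embedding space and push it through.

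First I would invoke Corollary~14 of~\cite{GayS4Diffeos}, which supplies an explicit generating set for $\pi_1(\mathop{Emb}(S^1, S^1\times S^3))$ building on the computation of Budney--Gabai~\cite{BudneyGabai}. The key feature I need is that these generators can be represented by loops $\alpha(i)_t$ whose traces $T(i)(t,z) = \alpha(i)_t(z)$ are the honestly embedded tori depicted in Figure~8 of~\cite{GayS4Diffeos}; embeddedness of each trace is exactly the hypothesis required to feed the loop into the torus-to-twin surgery construction.

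Next I would carry each generator across. For each $i$, surgering the triple $(S^1\times S^3, T(i), C)$ along the basepoint circle $C = S^1 \times \{p\}$ produces a half-unknotted Montesinos twin $W(i) = (R(i), S)$ in which $S$, the core of the surgery, is unknotted; these are the twins of Figures~1,~2,~3 of~\cite{GayS4Diffeos}, realized via Theorem~4 of~\cite{GayS4Diffeos}. Lemma~\ref{L:TwinsAndTori} then yields $\mathcal{H}([\alpha(i)_t]) = \tau_{W(i)}$ for each $i$. Combining with the previous paragraph, the $\tau_{W(i)}$ are the images of a generating set and therefore generate $\mathcal{M}_0$.

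I expect the main thing to watch is not a hard argument but a normalization subtlety in applying Lemma~\ref{L:TwinsAndTori}: each representing loop $\alpha(i)_t$ must lie in the marked component of $\mathop{Emb}(S^1, S^1\times S^3)$ and must be normalized so that the longitudinal circle $S^1_a$ of the trace torus is homotopically trivial in $S^1\times S^3$, so that after surgery it becomes the homologically trivial $S^1_l$ rather than a meridian. As in the discussion preceding Corollary~\ref{C:M0andHofPi1}, this is arranged by multiplying each generator by an appropriate element of the domain-reparametrization subgroup, which lies in the kernel of $\mathcal{H}$ and hence leaves the conclusion untouched. The only remaining verification, that the specific tori $T(i)$ of Figure~8 really surger to the specific twins $W(i)$ of Figures~1--3, is a direct comparison of the pictures in~\cite{GayS4Diffeos} and carries no essential difficulty.
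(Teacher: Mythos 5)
Your proposal matches the paper's own justification, which is simply to combine Corollary~14 and Theorem~4 of~\cite{GayS4Diffeos} with Corollary~\ref{C:M0andHofPi1}: the generators of $\pi_1(\mathop{Emb}(S^1,S^1\times S^3))$ represented by the embedded tori $T(i)$ are pushed through $\mathcal{H}$ via the torus-to-twin surgery of Lemma~\ref{L:TwinsAndTori}. Your added remarks on the $W_1$-normalization and the domain-reparametrization subgroup are consistent with the paper's setup and do not change the argument.
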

Figures~\ref{F:snake} and ~\ref{F:FingerMove} reproduce two illustrations of $W(3)$ from~\cite{GayS4Diffeos}; the generalization to $W(i)$ is clear.
\begin{figure}
  \labellist
  \small\hair 2pt
  \pinlabel $R(3)$ [r] at 2 125
  \pinlabel $S$ [l] at 260 106
  \endlabellist
  \centering
  \includegraphics[width=8cm]{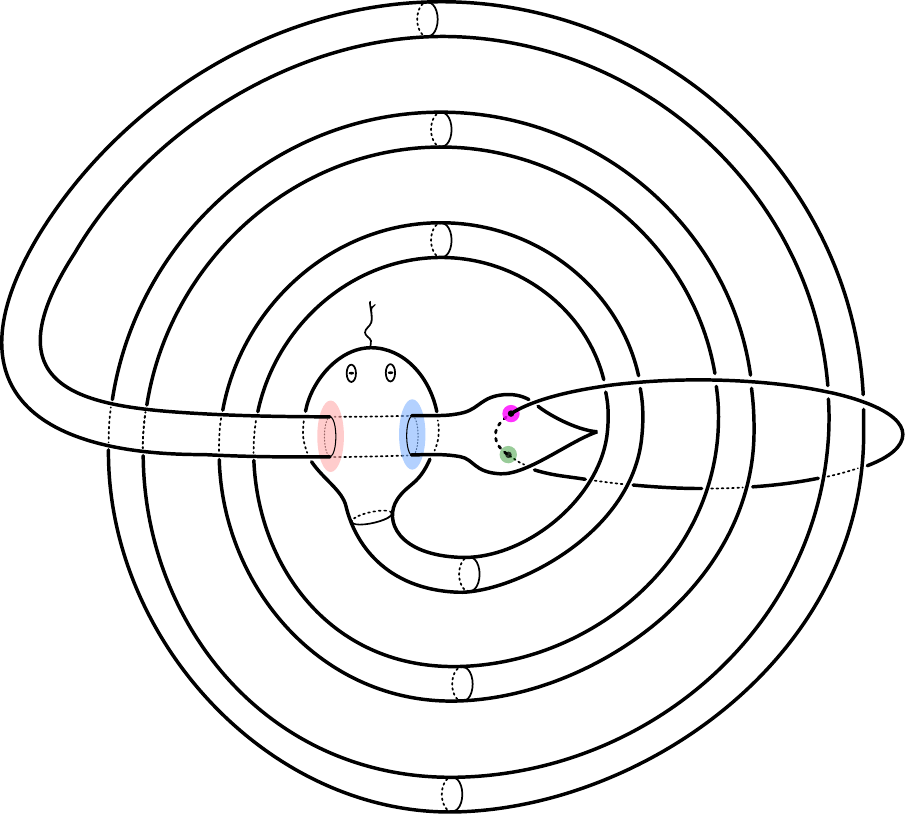}
  \caption{An illustration of $W(3) = (R(3),S)$, with the generalization to $W(i)$ being to wrap $i$ times around instead of three times around. The red and blue disks in $R(3)$ (``ear holes'' of the snake) are pushed forward and backwards in time to avoid self-intersection. We only show the equator of $S$, with the hemispheres lying in the past and future. The two intersection points are colored pink and green.}
  \label{F:snake}
\end{figure}
\begin{figure}
  \labellist
  \small\hair 2pt
  \pinlabel $R(3)$ [r] at 68 69
  \pinlabel $S$ [l] at 124 51
  \endlabellist
  \centering
  \includegraphics{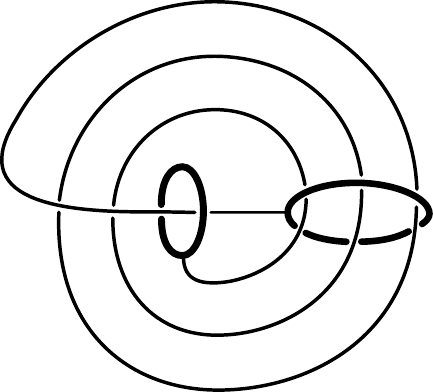}
 \caption{An alternate illustration of $W(3)$, involving two disjoint, embedded $2$--spheres in $S^4$ (the two thick circles capped off with hemispheres in past and future) and an arc connecting them. Pushing a finger from one of the spheres out along this arc and then doing a finger move when one encounters the other sphere, creating a pair of transverse intersections, gives $W(3)$. To recover the illustration in Figure~\ref{F:snake}, push the finger from $R(3)$ until it meets $S$. However, this description is more ``balanced'' between $R(3)$, allowing the user to decide which sphere they prefer to draw as the complicated one.}
 \label{F:FingerMove}
\end{figure}

An important feature of the twins $W(i)=(R(i),S)$ is that {\em both} $R(i)$ and $S$ are unknotted. Thus, in addition to the loops of circles $\alpha(i)_t$ associated to $W(i)$, we have loops of circles $\overline{\alpha(i)_t}$ associated to $\overline{W(i)} = (S,R(i))$. Then by Lemma~\ref{L:Inverse}, we know that
\[ \tau_{W(i)}^{-1} = \mathcal{H}([\overline{\alpha(i)_t}]) \]
Our main calculation in this paper is:
\begin{proposition} \label{P:Relation}
 In $\pi_1(\mathop{Emb}(S^1, S^1 \times S^3))$, $[\overline{\alpha(i)_t}] = n_i [\alpha(1)_t]$ for some integer $n_i$. 
\end{proposition}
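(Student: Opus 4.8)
The plan is to prove the relation geometrically, at the level of the embedded tori that trace out the two loops, and then to confirm the multiplicity with a single numerical invariant. Throughout I use the correspondence of Lemma~\ref{L:TwinsAndTori} between half-unknotted twins and loops of circles, together with the Budney--Gabai fact~\cite{BudneyGabai} that every class in $\pi_1(\mathop{Emb}(S^1,S^1\times S^3))$ is represented by a loop whose trace is an embedded torus.

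First I would make explicit the torus $\overline{T(i)}$ that traces $\overline{\alpha(i)_t}$, starting from the known torus $T(i)$ of Figure~8 of~\cite{GayS4Diffeos}. Recall that $T(i)$, the trace of $\alpha(i)_t$, is the image of the complicated sphere $R(i)$ after surgering the unknotted sphere $S$; its complexity is exactly the $i$-fold wrapping of $R(i)$ in the $S^1$ direction. The loop $\overline{\alpha(i)_t}$ is associated instead to $\overline{W(i)}=(S,R(i))$, so $\overline{T(i)}$ is obtained by surgering the complicated sphere $R(i)$ --- which is also unknotted, so the surgery again produces $S^1\times S^3$ --- and recording the image of the \emph{simple} sphere $S$. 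The crucial observation is that, although the resulting identification of the surgered $S^4$ with $S^1\times S^3$ is twisted by all of the complication of $R(i)$, the torus $\overline{T(i)}$ is merely the surgered image of the geometrically simple sphere $S$: it is a standard sphere with two scar-tubes attached through the glued-in region, and all of its nontriviality is concentrated in how those two tubes wind around the $S^1$ factor.

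Next I would isotope $\overline{T(i)}$ into standard form. Because $\overline{T(i)}$ is built from the simple sphere $S$ rather than from a genuinely $i$-fold-wrapped sphere, I expect it to reduce --- through an ambient isotopy carrying the basepoint circle $C=S^1\times\{p\}$ along on the torus --- to a standard torus winding $n_i$ times in the $S^1$ direction, whose associated loop of circles is recognizably the $n_i$-fold iterate of the simplest generating loop $\alpha(1)_t$, where $n_i$ is the signed winding number of the two tubes. This is also why $[\overline{\alpha(i)_t}]$ is a multiple of $[\alpha(1)_t]$ specifically: the tori $T(j)$ with $j\ge 2$ carry genuine wrapping of a whole sphere and are not reducible in this way, whereas $\overline{T(i)}$ carries only the winding of two tubes and hence collapses onto iterates of the $j=1$ generator. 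After normalizing by the domain-reparametrization (``spinning the circle in place'') subgroup so that the longitudinal circle is nullhomotopic, exactly as in the proof of Lemma~\ref{L:TwinsAndTori}, this isotopy of tori upgrades to a homotopy of loops of framed embeddings; such an explicit homotopy exhibits $\overline{\alpha(i)_t}$ as the $n_i$-fold iterate of $\alpha(1)_t$ and so proves the equality $[\overline{\alpha(i)_t}]=n_i[\alpha(1)_t]$ directly, with no complete presentation of $\pi_1(\mathop{Emb}(S^1,S^1\times S^3))$ required.

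The hard part will be this middle step: following, through the surgery on the complicated sphere $R(i)$, precisely how the two scar-tubes on $S$ wind, and then writing down the explicit isotopy reducing $\overline{T(i)}$ to an iterate of $T(1)$. This demands careful bookkeeping of orientations --- of each circle $\alpha(i)_t$ and of the $t$-direction --- and of the basepoint corrections, that is, exactly the conventions flagged after the statement of Lemma~\ref{L:TwinsAndTori}. It is fortunate, as remarked there, that the precise value of $n_i$ is not needed for Theorem~\ref{T:M0order2}: it suffices to establish membership of $[\overline{\alpha(i)_t}]$ in the cyclic subgroup $\langle[\alpha(1)_t]\rangle$. I would therefore back up the geometric reduction with a single $\Z$-valued winding invariant on $\pi_1(\mathop{Emb}(S^1,S^1\times S^3))$, using it both to read off $n_i$ and to certify that no component transverse to $\langle[\alpha(1)_t]\rangle$ can survive.
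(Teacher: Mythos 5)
Your proposal diverges from the paper's proof in a way that leaves two genuine gaps. The paper does not attempt any explicit isotopy of $\overline{T(i)}$ onto an ``iterate'' of $T(1)$. Instead it exhibits $\overline{T(i)}$ as a standard torus tubed to a $2$--sphere (Figure~\ref{F:TubingConstruction}), computes the Budney--Gabai invariant $W_2([\overline{\alpha(i)_t}])$ by intersection counts with a single slice $\{0\}\times S^3$, observes that every monomial $x^{k_p}$ that can arise has $-2 \le k_p \le 2$, and then uses the relations in $\Lambda^0$ (which kill $x^0$, $x^{\pm 1}$ and identify $x^{-2}$ with $x^2$) to conclude $W_2([\overline{\alpha(i)_t}]) = n x^2 = n\, W_2([\alpha(1)_t])$ up to sign. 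The conclusion then follows because $W_1\times W_2$ is a \emph{complete} invariant (an isomorphism onto $\Z\times\Lambda^0$). Your central step --- ``I expect it to reduce \ldots to a standard torus winding $n_i$ times in the $S^1$ direction, whose associated loop of circles is recognizably the $n_i$-fold iterate of $\alpha(1)_t$'' --- is asserted rather than proved, and it is not even clear what object you are reducing to: the class $n_i[\alpha(1)_t]$ is a concatenation of loops of embeddings, and its trace is not ``a standard torus winding $n_i$ times'' (a torus winding nontrivially in the $S^1$ direction changes the homotopy class of the individual circles, i.e.\ the $W_1$-type data, not the multiplicity in $\pi_1$). Whether such an isotopy exists is essentially the content of the proposition plus more, so this cannot be deferred as bookkeeping.

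The second gap is your fallback plan. A ``single $\Z$-valued winding invariant'' cannot ``certify that no component transverse to $\langle[\alpha(1)_t]\rangle$ can survive'': after normalizing $W_1=0$, the group $\pi_1(\mathop{Emb}(S^1,S^1\times S^3))$ is isomorphic to $\Lambda^0$, a free abelian group of infinite rank (generated by $x^2,x^3,x^4,\dots$), and membership in the rank-one subgroup generated by $x^2$ requires the vanishing of infinitely many coordinates. No single homomorphism to $\Z$ can detect that. This is exactly why the paper works with the full $\Lambda^0$-valued invariant $W_2$ and argues that the geometry of $\overline{T(i)}$ (all spiralling confined between $\{0\}\times S^3$ and $\{1\}\times S^3$, so each circle meets the slice at most three times) forces all monomial degrees into $\{-2,\dots,2\}$. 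If you want to salvage your approach, you would either have to carry out the explicit homotopy to a genuine representative of $n_i[\alpha(1)_t]$, or replace your single winding number by a computation of $W_2$ itself together with an appeal to the completeness of $W_1\times W_2$.
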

(We use additive notation for $\pi_1(\mathop{Emb}(S^1, S^1 \times S^3))$ because Budney and Gabai show that this group is abelian~\cite{BudneyGabai}.) In fact, one can show that $n_i = \pm i$ but we do not need such a precise  result, and the result as stated is quicker and easier to prove. We combine the above result with the following observation:
\begin{lemma} \label{L:W1Flip}
 The twin $W(1)=(R(1),S)$ is isotopic (taking orientations into account) to $\overline{W(1)} = (S,R(1))$ and thus, by Lemma~\ref{L:Inverse}, $\tau_{W(1)} = \tau_{W(1)}^{-1}$.
\end{lemma}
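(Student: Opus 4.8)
The plan is to produce an ambient isotopy of $S^4$ that interchanges the two spheres of $W(1)$ as oriented submanifolds, exhibiting an isotopy of oriented Montesinos twins $W(1) \simeq \overline{W(1)} = (S, R(1))$. Since $\tau_W$ depends only on the oriented isotopy class of $W$, this gives $\tau_{W(1)} = \tau_{\overline{W(1)}}$; and Lemma~\ref{L:Inverse} gives $\tau_{\overline{W(1)}} = \tau_{(S,R(1))} = \tau_{(R(1),S)}^{-1} = \tau_{W(1)}^{-1}$, so the two identities combine to the desired $\tau_{W(1)} = \tau_{W(1)}^{-1}$.

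To build the swap I would use the balanced description of Figure~\ref{F:FingerMove}, in which $W(1)$ is obtained from two disjoint, unknotted, round $2$--spheres $A$ and $B$ and a standard arc $\gamma$ between them by fingering $A$ through $B$, so that $R(1) = A^+$ (the fingered sphere) and $S = B$. For winding number $1$ the triple $(A,B,\gamma)$ has an evident orientation-preserving symmetry $\rho$ --- for instance a rotation of $S^4 = \mathbb{R}^4 \cup \{\infty\}$ by $\pi$ about a suitable $2$--plane --- interchanging $A$ and $B$ and reversing $\gamma$; lying in the connected group $SO(5)$, it is isotopic to the identity. Applying $\rho$ shows $(A^+, B) \simeq (B^+, A)$, and the balanced observation recorded in the caption of Figure~\ref{F:FingerMove} --- that the finger may be slid from one sphere onto the other without changing the twin --- then identifies $(B^+, A)$ with $(B, A^+) = (S, R(1)) = \overline{W(1)}$. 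Composing the two isotopies yields $W(1) \simeq \overline{W(1)}$.

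A cleaner variant, should the identification hold, is to recognize $W(1)$ as the symmetric twin $G = (R_G, S_G)$ of two great $2$--spheres $R_G = S^4 \cap \{x_4 = x_5 = 0\}$ and $S_G = S^4 \cap \{x_2 = x_3 = 0\}$ in $S^4 \subset \mathbb{R}^5$, which meet transversely at $\pm e_1$. This model carries the manifest coordinate involution $\sigma : (x_1, x_2, x_3, x_4, x_5) \mapsto (x_1, x_4, x_5, x_2, x_3)$; as the permutation $(2\,4)(3\,5)$ it has determinant $+1$, so $\sigma \in SO(5)$ is orientation preserving and isotopic to the identity, and it visibly exchanges $R_G$ and $S_G$, carrying $(R_G, S_G)$ to $(S_G, R_G)$ with the standard orientations.

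The main obstacle I expect is twofold and concerns orientations rather than the bare set-level swap: first, checking that the chosen symmetry sends the oriented twin $(R(1), S)$ to $(S, R(1))$ and not to some orientation-reversed variant; and second, verifying that the swap really is realized by an honest ambient isotopy of the full configuration --- in the balanced picture, that the finger-slide is genuinely an isotopy, and in the great-sphere picture, that $W(1)$ is isotopic to $G$ and not to the trivial twin or to $\overline{G}$. As the remark following Lemma~\ref{L:TwinsAndTori} stresses, though, we need only the existence of some consistent orientation convention making the exchange correct; and since the goal is merely that $\tau_{W(1)}$ squares to the identity, any residual sign ambiguity is immaterial.
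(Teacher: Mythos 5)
Your first argument is essentially the paper's proof: the paper simply isotopes $W(1)$ in the finger-move schematic to a visibly symmetric diagram (Figure~\ref{F:W1W1bar}) and reads off the exchange of the two spheres, which is exactly your rotation $\rho$ combined with the finger-slide already asserted in the caption of Figure~\ref{F:FingerMove}. One caveat: your closing remark that ``any residual sign ambiguity is immaterial'' is too quick, since if the symmetry carried $(R(1),S)$ to $(S,-R(1))$ the conclusion would degenerate to the tautology $\tau_{W(1)}=\tau_{W(1)}$ rather than the needed $\tau_{W(1)}=\tau_{W(1)}^{-1}$ --- but the paper's own one-figure proof leaves this orientation check equally implicit.
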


\begin{proof}
 Figure~\ref{F:W1W1bar} illustrates $W(1)$ using the ``finger move'' schematic of Figure~\ref{F:FingerMove} and then shows an isotopy to a diagram which is obviously symmetric between the two $2$--spheres.
\begin{figure}
  \centering
  \includegraphics{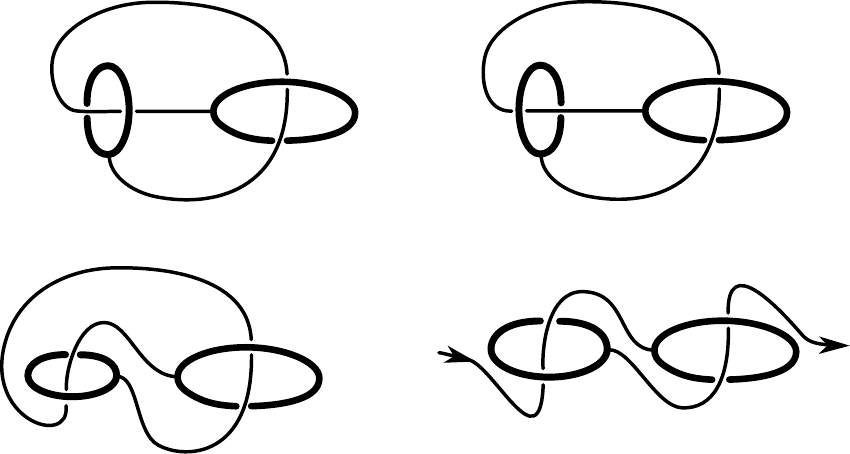}
 \caption{Isotoping $W(1)$ into a symmetric position so as to see that $W(1) = (R(1),S)$ is isotopic to $\overline{W(1)} = (S,R(1))$.}
 \label{F:W1W1bar}
\end{figure}
\end{proof}

Note that in~\cite{BudneyGabai} the authors discuss ``barbell diffeomorphisms", clearly related to Montesinos twists, and give conditions under which barbell diffeomorphisms have order $2$. Most likely Lemma~\ref{L:W1Flip} is a consequence of their Proposition~5.17~\cite{BudneyGabai}.

%
%

From these three results we get:
\begin{proof}[Proof of Theorem~\ref{T:M0order2}]
 An immediate corollary of Proposition~\ref{P:Relation} is that, switching to additive notation for $\pi_0(\mathop{Diff^+}(S^4))$,
 \[ -\tau_{W(i)} = \pm n_i \tau_{W(1)} \]
 From Lemma~\ref{L:W1Flip}, we know that $\tau_{W(1)}$ has order two or is trivial. Since $\mathcal{M}_0$ is generated by $\{\tau_{W(i)}, i \in \N\}$, we conclude that $\mathcal{M}_0$ is generated by $\tau_{W(1)}$ and thus is either the trivial group or the cyclic group of order two. 
\end{proof}

The rest of this paper is devoted to proving Proposition~\ref{P:Relation}

\section{Calculating the Budney-Gabai invariants}

To prove Proposition~\ref{P:Relation}, we need a picture of the loop of circles $\overline{\alpha(i)_t}$ in $S^1 \times S^3$ associated to the Montesinos twin $\overline{W(i)} = (S,R(i))$ in $S^4$. To get this, we need to first draw a picture of $\overline{W(i)} = (S,R(i))$ in which $R(i)$ appears as a standard unknotted $S^2$ and $S$ appears as the interesting half of the twin. Then we can surger along $R(i)$ so as to draw a picture of the resulting embedded torus $\overline{T(i)}$ in $S^1 \times S^3$, from which we can understand the loop of embeddings $\overline{\alpha(i)_t}$. 
This will then be used to compute the $W_2$ invariant of $[\overline{\alpha(i)_t}] \in \pi_1(\mathop{Emb}(S^1, S^1 \times S^3))$ defined in~\cite{BudneyGabai}, which will be sufficient to prove the proposition.

Figure~\ref{F:FingerMove} is most useful for performing the isotopy that standardizes $R(i)$ and leaves $S$ looking complicated. To recover Figure~\ref{F:snake} from Figure~\ref{F:FingerMove}, one pushes the finger from the sphere labelled $R(3)$ along the arc until meeting $S$ and then performing a finger move there. However, one obtains an isotopic Montesinos twin by pushing the figure out along the arc starting from $S$ until meeting $R(3)$ and then performing the finger move; this leaves $R(3)$ still ``looking'' like an unknot. In fact, we can first perform an isotopy to the diagram in Figure~\ref{F:FingerMove} to put $R(3)$ into exactly the position where $S$ was, as in Figure~\ref{F:IsotopyInS4}. The final step in Figure~\ref{F:IsotopyInS4} represents the result of surgering along the (now standardized) $W(3)$ to the torus $\overline{T(3)}$ in $S^1 \times S^3$.
\begin{figure}
\begin{center}

\begin{subfigure}{.3\textwidth}
  \centering
  \fbox{\includegraphics[width=.8\linewidth]{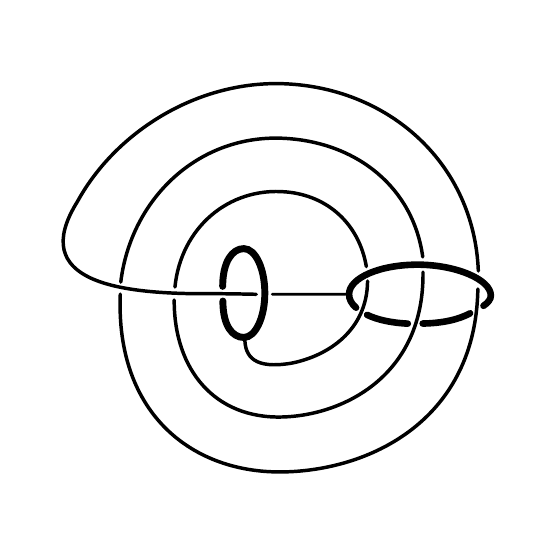}}
\end{subfigure}
\begin{subfigure}{.3\textwidth}
  \centering
  \fbox{\includegraphics[width=.8\linewidth]{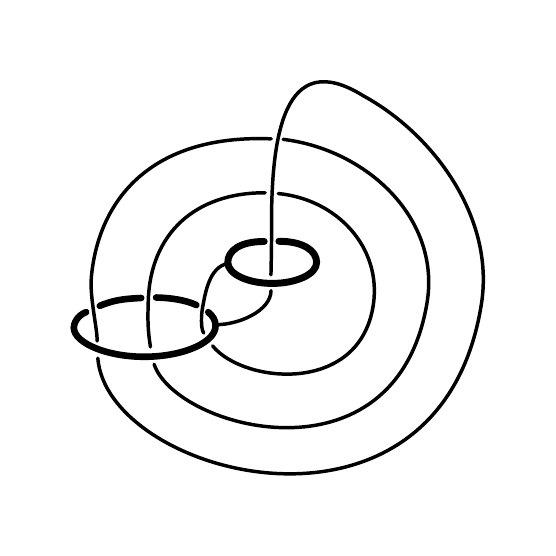}}
\end{subfigure}
\begin{subfigure}{.3\textwidth}
  \centering
  \fbox{\includegraphics[width=.8\linewidth]{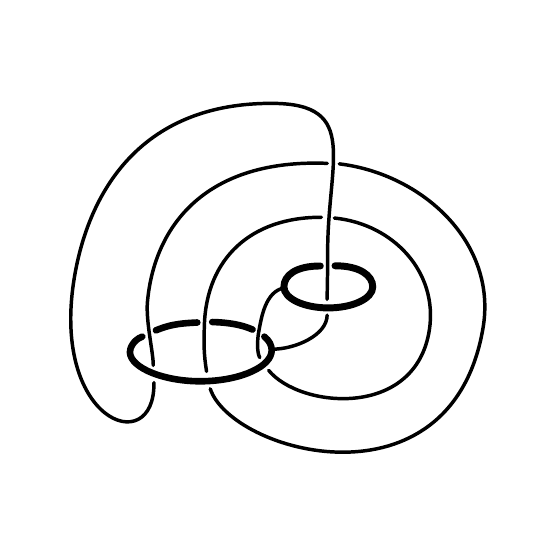}}
\end{subfigure}\\
\begin{subfigure}{.3\textwidth}
  \centering
  \fbox{\includegraphics[width=.8\linewidth]{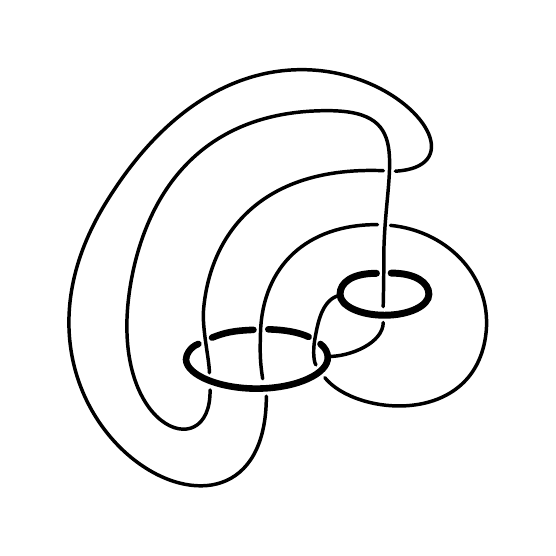}}
\end{subfigure}
\begin{subfigure}{.3\textwidth}
  \centering
  \fbox{\includegraphics[width=.8\linewidth]{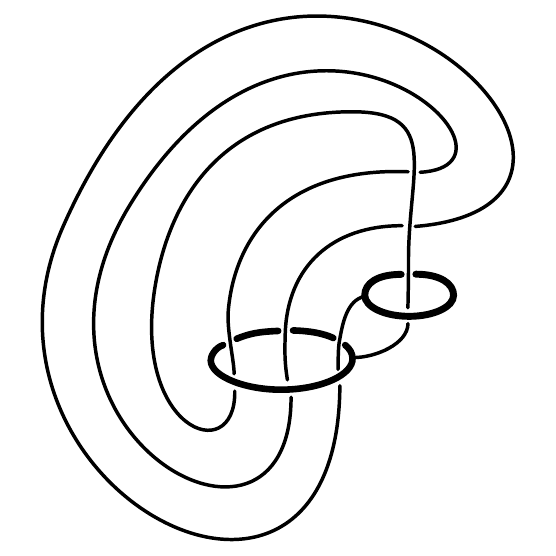}}
\end{subfigure}
\begin{subfigure}{.3\textwidth}
  \centering
  \fbox{\includegraphics[width=.8\linewidth]{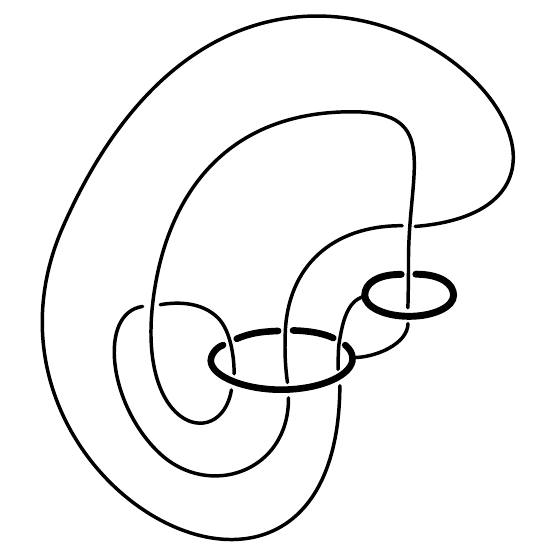}}
\end{subfigure}\\
\begin{subfigure}{.3\textwidth}
  \centering
  \fbox{\includegraphics[width=.8\linewidth]{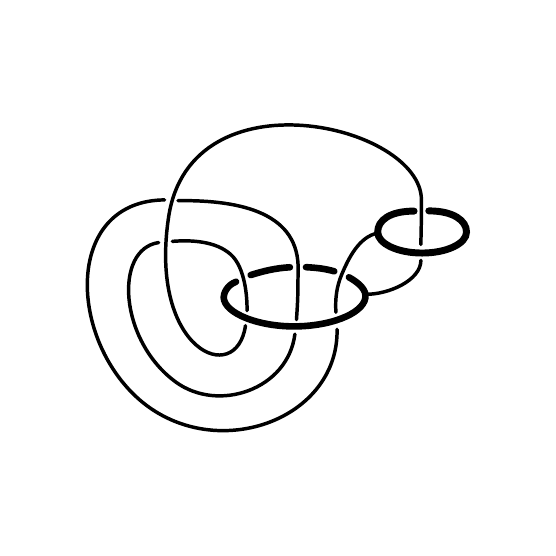}}
\end{subfigure}
\begin{subfigure}{.3\textwidth}
  \centering
  \fbox{\includegraphics[width=.8\linewidth]{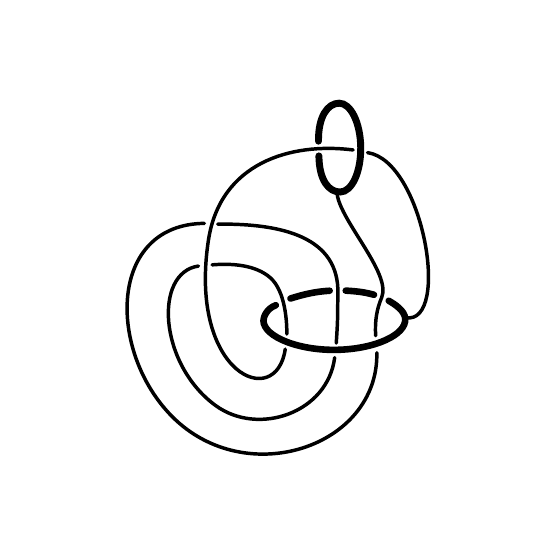}}
\end{subfigure}\\
\begin{subfigure}{.3\textwidth}
  \centering
  \fbox{\includegraphics[width=.8\linewidth]{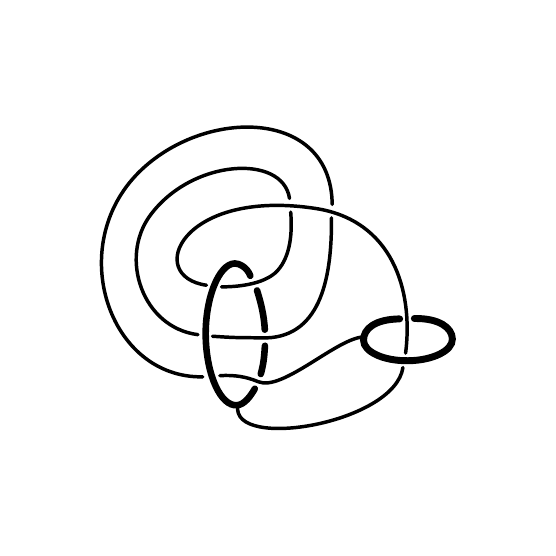}}
\end{subfigure}
\begin{subfigure}{.3\textwidth}
  \centering
  \fbox{\includegraphics[width=.8\linewidth]{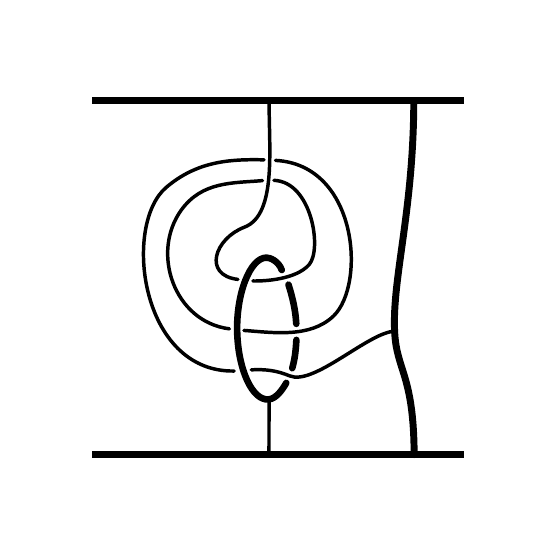}}
\end{subfigure}
 
\end{center}

\caption{An isotopy of $W(3)$. The final frame should be interpreted as a diagram of an embedded torus in $S^1 \times S^3$, the result of surgering along $R(3)$. The interpretation of this diagram is made clearer in Figure~\ref{F:SnakeBarInS1XS3}.}
\label{F:IsotopyInS4}
\end{figure}
Figure~\ref{F:SnakeBarInS1XS3} illustrates this final torus $\overline{T(3)}$ more explicitly in $S^1 \times S^3$, and this should be compared to Figure~\ref{F:FromBudneyGabaiToMontesinos} (Figure~8 in~\cite{GayS4Diffeos}) which illustrates the original $T(3)$.
\begin{figure}
  \labellist
  \small\hair 2pt
  \pinlabel $C$ [r] at 223 130
  \endlabellist
  \centering
  \includegraphics[width=6cm]{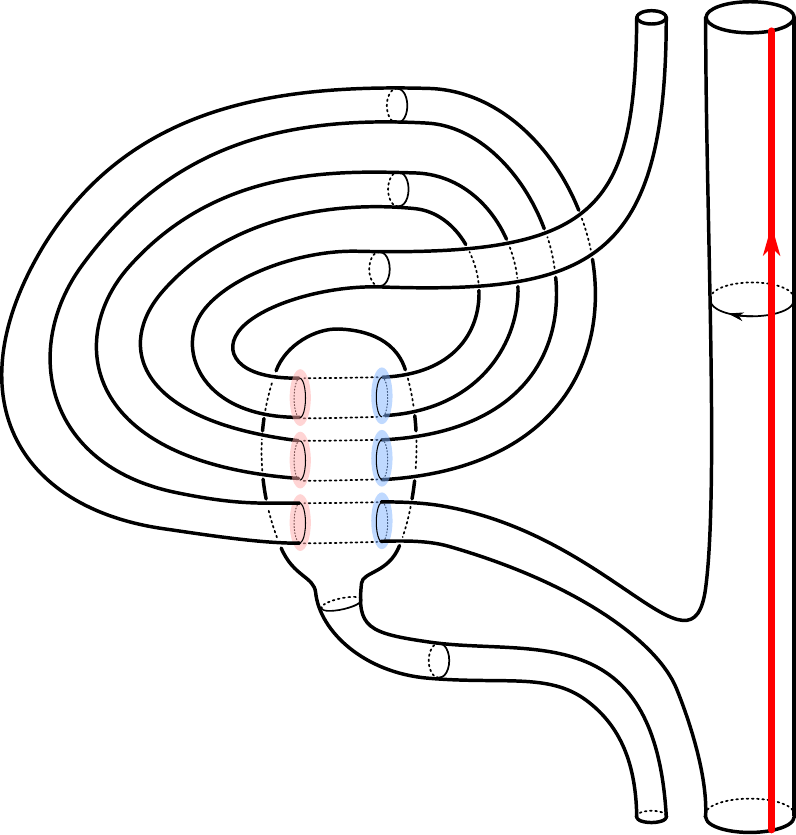}
  \caption{The embedded torus $\overline{T(3)}$ in $S^1 \times S^3$. The top is glued to the bottom, and horizontal slices are $S^3$'s, with the ``time'' coordinate indicated in red/blue shading, as in Figure~\ref{F:snake}.}
  \label{F:SnakeBarInS1XS3}
\end{figure}
\begin{figure}
  \labellist
  \small\hair 2pt
  \pinlabel $C$ [r] at 230 150
  \endlabellist
  \centering
  \includegraphics[width=6cm]{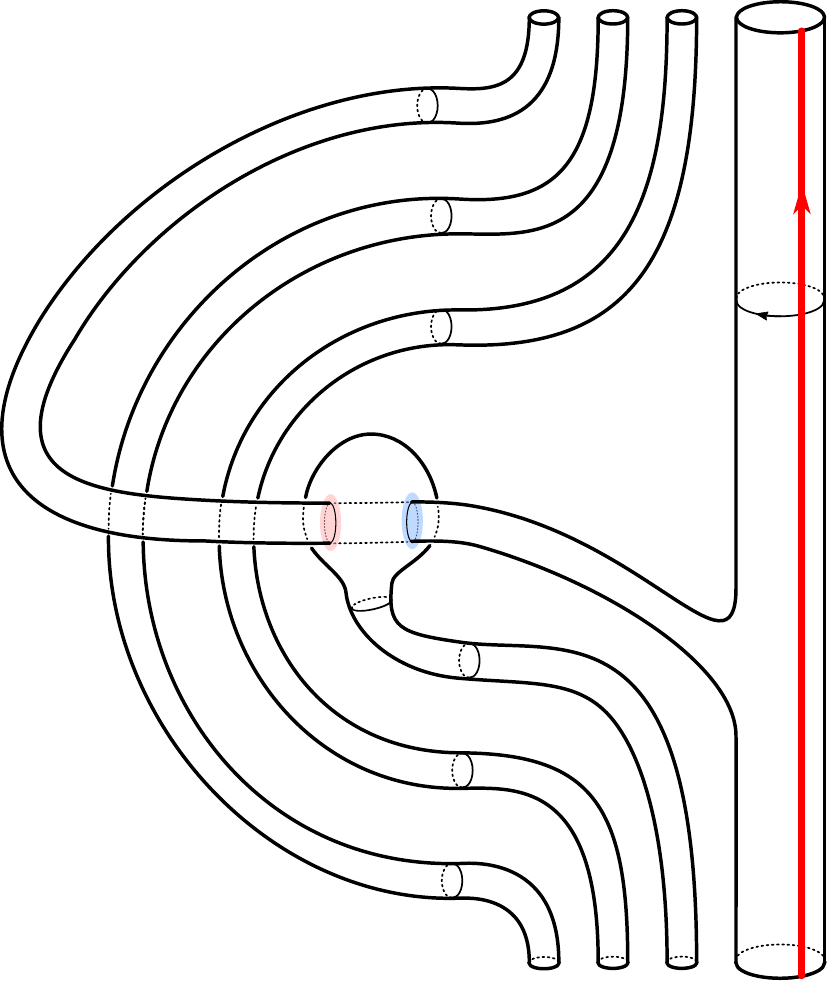}
  \caption{The embedded torus $T(3)$ in $S^1 \times S^3$, the obvious next member of the family of tori described in Figure~4 of~\cite{BudneyGabai}.}
  \label{F:FromBudneyGabaiToMontesinos}
\end{figure}

Now we discuss Budney and Gabai's analysis of $\pi_1(\mathop{Emb}(S^{1},S^{1}\times S^{3}))$ and the aforementioned $W_2$ invariant. 


In Theorem 2.9 of \cite{BudneyGabai}, the authors compute $\pi_1$ of all path components of $\pi_1(\mathop{Emb}(S^{1},S^{1}\times S^{3}))$. Here we summarize their main result only for the component we care about, the component containing our chosen basepoint $S^1 \times \{p\}$. This result is that there is an isomorphism:
\begin{equation}\label{E:Invariants}
W_1\times W_2:\pi_1(\mathop{Emb}(S^{1},S^{1}\times S^{3})) \rightarrow \Z\times \Lambda^{0},
\end{equation}
where
\[
\Lambda^{0} =\Z[x,x^{-1}]/\langle x^{n} - x^{-n}~~\forall n \in \Z, x^{0}, x^{-1} \rangle.
\]
The isomorphism $W_1\times W_2$ is the product of two homomorphisms 
\begin{align*}
W_1:\pi_1(\mathop{Emb}(S^{1},S^{1}\times S^{3}) &\rightarrow \Z\\
W_2:\pi_1(\mathop{Emb}(S^{1},S^{1}\times S^{3}) & \rightarrow \Lambda^{0}.
\end{align*}
The invariant $W_1$ detects "spinning the circle in place" as discussed in the proof of Lemma \ref{L:TwinsAndTori}. We have assumed already that our loops of embeddings $\alpha_t: S^1 \hookrightarrow S^1 \times S^3$ have the property that for a fixed $z\in S^{1}$, the loop $\{\alpha_t(z), t \in [0,1]\}$ is homotopically trivial. This directly translates to saying that we can always assume that $W_1([\alpha_t]) = 0$ for any of the loops of circles we will be considering.  

\begin{remark}
As discussed in Lemma \ref{L:TwinsAndTori}, we are free to reparameterize the domain of our loops, without effecting $\mathcal{H}$. Considering the above isomorphism, \ref{E:Invariants}, we get that $\operatorname{ker}(W_2)\subset \operatorname{ker}(\mathcal{H})$
\end{remark}

The important invariant to discuss is thus $W_2$. We begin by reviewing the definition of $W_2$ found in~\cite{BudneyGabai}. Following the authors' notation, we will denote the two point configuration space of a manifold $M$ by $C_2(M)$. Let $\mathcal{CC}\subset C_{2}(S^{1}\times S^{3})$ be the submanifold of points of the form $((z_1,p),(z_2,p))$, diffeomorphic to $C_{2}(S^{1})\times S^{3}$, with an orientation coming from the diffemorphism,
\[
((z_1,z_2),p) \rightarrow ((z_1,p),(z_2,p)).
\]
Now, given any loop of embeddings $\alpha_t: S^1 \hookrightarrow S^1 \times S^3$ the authors define a map $A:S^{1}\times C_{2}(S^{1}) \rightarrow C_{2}(S^{1}\times S^{3})$ given by the formula
\[
A(t,(z_1,z_2)) = (\alpha_t(z_1),\alpha_t(z_2)).
\]
Assuming that $A$ is transverse to $\mathcal{CC}$, one sees that $A^{-1}(\mathcal{CC})$ is just a finite collection of points. On the set $A^{-1}(\mathcal{CC})$, there is a natural $\Sigma_2$ action given by permuting the $C_2(S^{1})$ coordinates. On the quotient, $A^{-1}(\mathcal{CC})/\Sigma_2$, assign to $[p]\in A^{-1}(\mathcal{CC})/\Sigma_2$ the monomial $\pm x^{k_p}$. Here, the sign of the monomial is given by the signed intersection number of $A(p)$ and $\mathcal{CC}$, for some representative $p$ of $[p]$, and the degree, $k_p$, of the monomial is given by the following procedure. First, we consider the coordinates of the point $p$: $(t,(z_1,z_2))$. Next, take the path $[z_1,z_2]$ going from $z_1$ to $z_2$ in $S^1$ in the positively oriented direction. As $p\in \mathcal{CC}$, the $S^{3}$ coordinate of $\alpha_t(z_1)$ equals the $S^{3}$ coordinate of $\alpha_t(z_2)$. Let $B_p$ denote the arc in $S^1\times S^3$ which connects $\alpha_t(z_2)$ to $\alpha_t(z_1)$ by moving along the $S^1$ factor in the direction opposite to the orientation, while keeping the $S^3$ coordinate fixed. With this, we construct a map $K_p: S^1\rightarrow S^1\times S^3$ by concatenating the arc $\alpha_t([z_1,z_2])$ with the arc $B_p$. The degree, $k_p$ is then given by
\[
k_p = \operatorname{deg}(\pi_{S^1}\circ K_p).
\]
Alternatively, we can calculate $k_p$ first by counting the signed intersection of $K_p$ with $\{z'\}\times S^3$, for a generic choice of $z'\in S^1$. If we choose our $S^3$ slice away from both $\alpha_t(z_1)$ and $\alpha_t(z_2)$, then $k_p$ can be calculated by counting the signed intersection of the arcs $\alpha_t[z_1,z_2]$ and $B_p$ with the $S^3$ slice, and adding the result. So,
\[
k_p = \alpha_t([z_1,z_2])\cdot (\{z'\}\times S^3)+B_p \cdot (\{z'\}\times S^3)
\]
where $\cdot$ is the signed count of transverse intersection points.

Adding the monomials up over all $[p]\in A^{-1}(\mathcal{CC})/\Sigma_2$ gives the formula for $W_2$:
\[
W_2([\alpha_t]) = \sum_{[p]\in A^{-1}(\mathcal{CC})/\Sigma_2}\pm x^{k_p}.
\]
It is shown in \cite{BudneyGabai} that $W_2$ is well defined on homotopy classes of loops when considered as a map to $\Lambda^0$. Moreover, none of the choices made affect the result as long as they are made consistently.

\begin{proof}[Proof of Proposition~\ref{P:Relation}]
We begin by recalling some notation. First, $T(i)$ and $\overline{T(i)}$ are all embedded tori in $S^{1}\times S^{3}$ (see Figures~\ref{F:SnakeBarInS1XS3} and~\ref{F:FromBudneyGabaiToMontesinos}). Each embedded torus then corresponds to a loop of embeddings of circles, $\alpha(i)_t : S^1 \hookrightarrow S^1 \times S^3, t \in [0,1]$ and $\overline{\alpha(i)_t} : S^1 \hookrightarrow S^1 \times S^3, t \in [0,1]$, respectively. Each loop starts at the embedding $t\mapsto (t,p)$, depicted by the red curve $C$ in both Figure~\ref{F:SnakeBarInS1XS3} and~\ref{F:FromBudneyGabaiToMontesinos}. 
Now in \cite{BudneyGabai}, the authors show that $W_2([\alpha(1)_t]) = \pm x^2$. As $W_2$ is a homomorphism, the proposition will follow once we show that $W_2([\overline{\alpha(i)_t}])= n x^2$, for some $n\in \Z$. 

To begin our calculation, we note that the torus $\overline{T(i)}$ can be constructed by tubing together the "standard" torus with a 2--sphere as illustrated in Figure \ref{F:TubingConstruction}, with $i$ equal to the number of times the tube spirals around and through the 2--sphere. The ambient space in the figure should be viewed as $[0,1]\times S^3$, with $\{0\}\times S^3$ being identified with $\{1\}\times S^3$ by the identity. Note that all the "spiralling" the tube does happens between $\{0\}\times S^3$ and $\{1\}\times S^3$. So, to calculate $W_2$, we will compute intersections with the sphere $\{0\}\times S^3$. Note that there are two values of $t$ when $\alpha(i)_t$ intersects $\{0\} \times S^3$ nontransversely, but transversality of $A$ with $\mathcal{CC}$ means that these values of $t$ will not occur in the calculation of $W_2$. 

Now, suppose that $(t,z_1, z_2)$ is a representative for $[p]\in A^{-1}(\mathcal{CC})/\Sigma_2$. Then we have that 
\[
k_p = \overline{\alpha(i)_t}([z_1,z_2])\cdot (\{0\}\times S^3)+B_p \cdot (\{0\}\times S^3)
\]
As $\overline{\alpha(i)_t}([z_1,z_2])$ is a sub arc of the embedded circle $\overline{\alpha(i)_t}(S^1)$, and $\overline{\alpha(i)_t}(S^1)$ has either exactly one positive intersection or exactly two positive intersections and one negative intersection with $\{0\}\times S^3$, the signed intersection count of $\overline{\alpha(i)_t}([z_1,z_2])$ with $\{0\}\times S^3$ lies in the set $\{0, 1, -1, 2\}$. For the arc $B_p$, this is a sub arc of $S^1\times \{v_p\}$ for some point $v_p\in S^3$, oriented opposite to the given orientation of $S^1$ in $S^1 \times S^3$. As such, the intersection number of $B_p$ with $\{0\}\times S^3$ is either $0$ or $-1$. Putting these together, we get that 
\[
W_2([\overline{\alpha(i)_t}]) = m_{-2}x^{-2}+m_{-1}x^{-1}+m_{0}x^{0}+m_{1}x^{1}+m_{2}x^{2}
\]
for some set of integers $m_{k}$, $k = -2,-1,0,1,2$. Finally, by considering the relations for $\Lambda^0$, we get
\[
W_2([\overline{\alpha(i)_t}]) = n x^2
\]
where $n = m_{-2}+m_2$.

\begin{figure}
  \labellist
  \small\hair 2pt
  \endlabellist
  \centering
  \includegraphics[width=8cm]{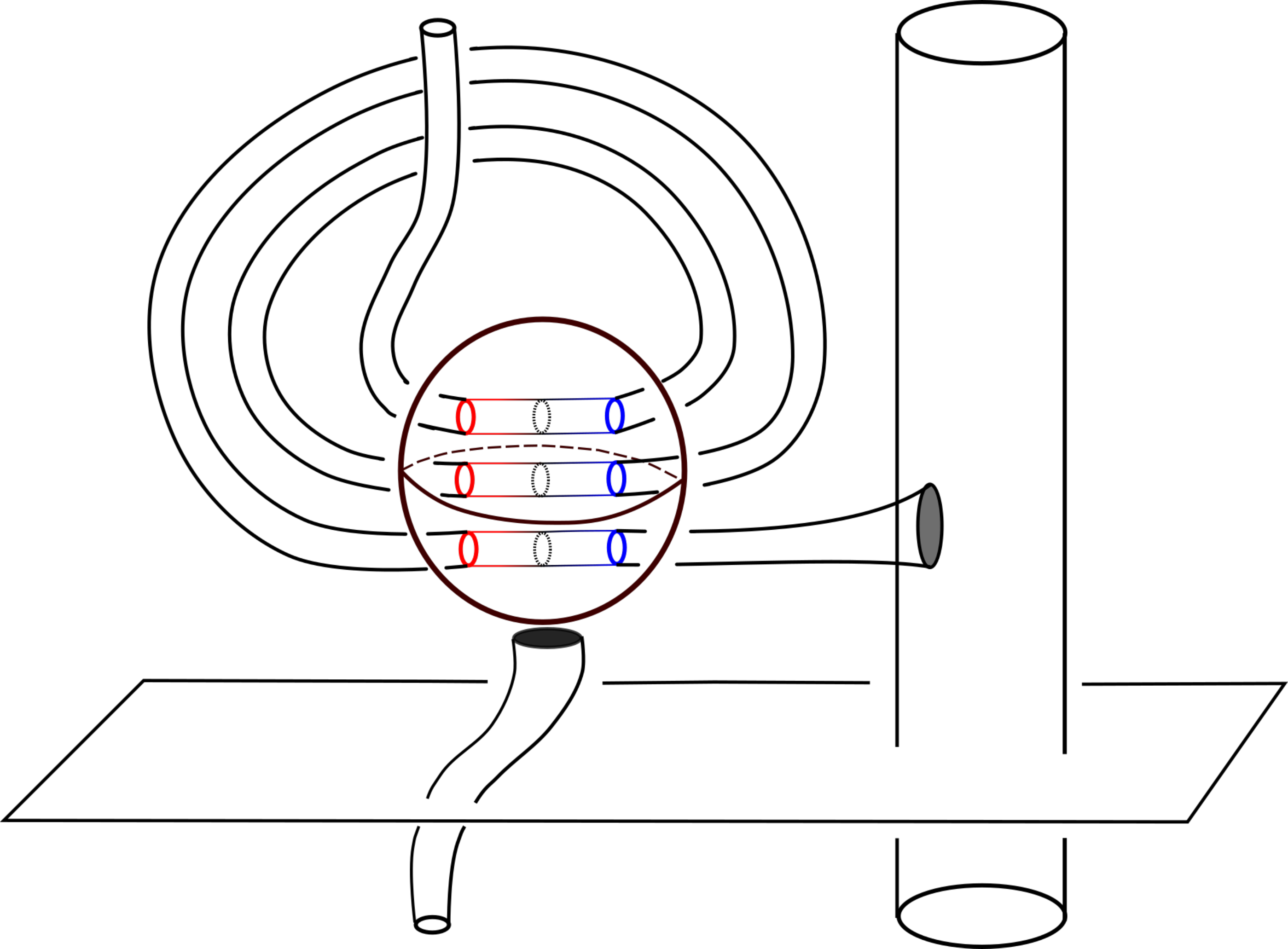}
  \caption{Pictured here is the tubing constructing for the torus $\overline{T(3)}$. The general torus $\overline{T(i)}$ is given by using a tube which links the embedded sphere $i$ times before it is attached to the sphere.  \label{F:TubingConstruction}}
\end{figure}

\end{proof}

\bibliographystyle{plain}
%
%
\bibliography{MontesinosTwistRelations}

\begin{thebibliography}{1}

\bibitem{BudneyGabai}
Ryan {Budney} and David {Gabai}.
\newblock {Knotted 3-balls in $S^4$}, 2019.
\newblock arXiv:1912.09029.

\bibitem{GayS4Diffeos}
David~T. Gay.
\newblock Diffeomorphisms of the 4-sphere, cerf theory and montesinos twins,
  2021.
\newblock arXiv:2102.12890.

\bibitem{MontesinosI}
Jos\'{e}~M. Montesinos.
\newblock On twins in the four-sphere. {I}.
\newblock {\em Quart. J. Math. Oxford Ser. (2)}, 34(134):171--199, 1983.

\bibitem{MontesinosII}
Jos\'{e}~Mar\'{\i}a Montesinos.
\newblock On twins in the four-sphere. {II}.
\newblock {\em Quart. J. Math. Oxford Ser. (2)}, 35(137):73--83, 1984.

\end{thebibliography}

\end{document}